\newtheorem{theorem}{Theorem}[section]
\newtheorem{lemma}[theorem]{Lemma}
\newtheorem{proposition}[theorem]{Proposition}
\newtheorem{example}[theorem]{Example}
\newtheorem{remark}[theorem]{Remark}
\def\erre{{\rm I\!R}}
\def\R{{\rm I\!R}}
\def\meas{\mathop{\rm meas}}
\def\R{{\rm I\!R}}
\def\R{{\rm I\!R}}
\def\phi{\varphi}
\def\div{\mathop{\rm div}}
\def\meas{\mathop{\rm meas}}
\def\H{\mathop{W^{1,p}(\Omega)}}
\title[Nonlinear Neumann problems...]{Nonlinear Neumann problems driven by a nonhomogeneous differential operator}
\author{Giovanni Molica Bisci}
\address[G. Molica Bisci]{Dipartimento MECMAT, University of Reggio Calabria,
 Via Graziella, Feo di Vito, 89124 Reggio Calabria, Italy.} \email{gmolica@unirc.it}
\author{Du\v{s}an Repov\v{s}}
\address[D. Repov\v{s}]{Faculty of Education, and Faculty of Education Mathematics and Physics, University of Ljubljana, POB 2964, Ljubljana, Slovenia 1001.
}
\email{dusan.repovs@guest.arnes.si}
\thanks{{\it 2010 Mathematics Subject Classification.} 35A15, 35J20, 35J62.}
\keywords{Three weak solutions, Variational Methods, Divergence type equations.}
\thanks{Typeset by \LaTeX}
\begin{document}

\begin{abstract}
We study a nonlinear parametric Neumann problem driven by a nonhomogeneous
quasilinear elliptic differential operator $\operatorname{div}(a(x,\nabla u))$, a
special case of which is the $p$-Laplacian. The reaction term is a nonlinearity
function $f$ which exhibits $(p-1)$-subcritical growth. By using variational methods,
we prove a multiplicity result on the existence of weak solutions for such problems. An explicit example of an application
is also presented.
\end{abstract}
\maketitle
%%%%%%%%%%%%%%%%%%%%%%%%%%%%%%%%%%%%%%%%%%%%%%%%%%%%%%%%%%%%%%%%%%%%%%%%%%%%%%%%%%%%%%%%%%%%%%%%%%%%%%%%
\section{Introduction}
In this paper we study the existence of multiple solutions for the following Neumann problem,
\begin{equation} \tag{$N_{\lambda,\mu}$} \label{Nostro}
\left\{
\begin{array}{ll}
-\operatorname{div}(a(x,\nabla u))+|u|^{p-2}u= (\lambda k(x)+\mu)f(u) & \rm in \quad \Omega
\\ \displaystyle\frac{\partial u}{\partial n_a}=0  & \rm on \,
\partial \Omega.\\
\end{array}
\right.
\end{equation}

Here and in the sequel, $\Omega$ is a bounded, connected domain in $({\R}^{N},|\cdot|)$ with smooth
boundary $\partial \Omega$, $p>1$, $a:\bar\Omega\times {\R}^{N}\to {\R}^N$ is a suitable Carath\'{e}odory map which is strictly monotone in the $\xi\in \erre^N$ variable and $\partial u/\partial n_a:=a(x,\nabla u)\cdot n$, where $n$ is the outward unit normal vector on $\partial \Omega$. Further, $\lambda$ and $\mu$ are positive real parameters, $k\in L^{\infty}(\Omega)_+$ and finally, $f:\erre\rightarrow \erre$ is a continuous function which is $(p-1)$-sublinear at infinity. We cite a recent monograph by Krist\'aly, R\u adulescu and Varga \cite{k2} as a general reference on variational methods.\par

\indent Recently, problems involving $p$-Laplacian-like operators have been studied by several authors under different boundary conditions and by using different technical approaches. \par
For instance, Dirichlet
problems involving a general operator in divergence form were studied by De N\'apoli and Mariani in \cite{n1} by imposing symmetry condition on the map $\xi\mapsto a(a,\xi)$. In the cited paper the existence of one weak solution was proved by exploiting the standard
mountain pass geometry and requiring, among other assumptions, that the nonlinearity $f$ has a $(p-1)$-superlinear behaviour at infinity. The non-uniform case was successively considered by Duc and
Vu in \cite{d1} who extended the result of \cite{n1} under the key hypothesis that the map $a$ fulfills a suitable growth condition.\par
 In \cite{k1}, by using variational methods, Krist\'aly,
Lisei and Varga studied the analogue of the above case for a uniform Dirichlet problem with parameter,
obtaining the existence of three weak solutions requiring that the nonlinearity $f$ has a $(p-1)$-sublinear growth at
infinity.\par
 Successively, Yang, Geng and Yan \cite{y1} proved the existence of three weak solutions for singular
$p$-Laplacian type equations. Finally, Papageorgiou, Rocha and Staicu in
\cite{p1} considered a nonsmooth $p$-Laplacian problem in divergence form, obtaining
the existence of at least two nontrivial weak solutions. See also the contributions obtained by Servadei in \cite{Raffy} for related multiplicity results.\par

The study of the corresponding Neumann problem is in some sense lagging
behind. Superlinear Neumann problems were studied by Aizicovici, Papageorgiou and Staicu
\cite{APS} and Gasi\'{n}ski-Papageorgiou \cite{papa}. In \cite{APS} the differential operator is the
$p$-Laplacian and the superlinear reaction term satisfies the celebrated (AR)-condition. In \cite{papa}
the differential operator is nonhomogeneous incorporating the $p$-Laplacian, but for
the superlinear case the authors prove only an existence theorem and do not have
multiplicity results. Related to this paper are also the nice works \cite{HP,MP2} and references therein.\par

Our goal in this paper is to prove a multiplicity result for Neumann problem $(N_{\lambda,\mu})$ by using a critical point result due to Ricceri (see Theorem \ref{Ric-masodik}). More precisely, for a suitable $\mu=\mu_0$ and $\lambda$ sufficiently small, the existence of multiple solutions for problem $(N_{\lambda,\mu_0})$ will be obtained requiring that the nonlinearity $f$ has a $(p-1)$-linear growth in addition to a suitable oscillating behaviour of the associated potential (see condition $(\textrm{h}_m^{\mu_0})$). We also emphasize that our hypotheses on $a$, following the approach given in \cite{papa}, are considerably weaker than the corresponding ones in \cite{n1,k1}, where $a(x,\xi)=:\nabla_\xi A(x,\xi)$, with $A\in C(\bar \Omega\times \erre^N)$ and for every $x\in \bar \Omega$, $A(x,\cdot)\in C^1(\erre^N)$. Moreover, they assume that for every $x\in\bar \Omega$, the function $\xi\mapsto A(x,\xi)$ is a strongly convex function.\par
 This requirement, in the special case of the $p$-Laplacian operator $\div(|\nabla u|^{p-2}\nabla u)$ implies that $p\geq 2$. In contrast, in our approach we only have that for every $x\in\bar \Omega$, the map $\xi\mapsto A(x,\xi)$ is strictly convex. So, for $p$-Laplacian equations we allow any $p>1$.\par

The plan of the paper is as follows. Section 2 is devoted to our abstract framework, while Section 3 is dedicated to the main results. A concrete example of an application is then presented (see Example \ref{esempio}).
\section{Abstract Framework}
 Let $W^{1,p}(\Omega)$ $(p>1)$ be the
usual Sobolev space, equipped with the norm
$$
\|u\|:=\Big(\int_{\Omega}(|\nabla
u(x)|^{p}+|u(x)|^{p})dx\Big)^{1/p}.
$$

\noindent Further, let $W^{-1,p'}(\Omega)$, with $1/p+1/p'=1$, be its topological
dual and denote the duality brackets for the pair $(W^{-1,p'}(\Omega),W^{1,p}(\Omega))$ by
$\langle\cdot,\cdot\rangle$.
Indicate by $p^*$ the critical exponent of the Sobolev embedding $\H\hookrightarrow L^q(\Omega)$.\par
 Recall that if $p<N$ then $p^*=Np/(N-p)$ and for every $q\in [1,p^*]$ there exists a positive constant $c_q$ such that
\begin{equation}\label{Poi}
\|u\|_{L^q(\Omega)}\leq c_q \|u\|\,,
\end{equation}
\noindent for every $u\in \H$. Moreover, when $p\geq N$, this inequality holds for any $q\in [1,+\infty[$, since $p^*=+\infty$.\par

Our main tool will be the following abstract critical point theorem due to Ricceri \cite{Ri}.

\begin{theorem}\label{Ric-masodik} Let $H$ be a separable and reflexive real Banach space and let
$\mathcal N,\mathcal G:H\to \erre $ be sequentially weakly lower
semicontinuous and continuously G\^ateaux differentiable
functionals, with $\mathcal N$ coercive.\par Assume that the
functional $J_\lambda:=\mathcal N+\lambda\mathcal G$ satisfies the Palais-Smale
condition for every $\lambda>0$ small enough and that the set of all
global minima of $\mathcal N$ has at least $m$ connected components
in the weak topology, with $m\geq 2.$

Then for every $\eta>\displaystyle\inf_{u\in H}\mathcal N(u),$ there exists $\bar
\lambda>0$ such that for every $\lambda\in (0,\bar \lambda),$
the functional $J_\lambda$ has at least $m+1$
critical points, $m$ of which are lying in the set $\mathcal
N^{-1}((-\infty,\eta)).$
\end{theorem}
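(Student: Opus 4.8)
\emph{Proof plan.} The plan is to recover the $m$ critical points lying below the level $\eta$ as local minima of $J_\lambda$ localized around the connected components of the minimum set of $\mathcal N$, and to produce the $(m+1)$-st one by a mountain pass argument anchored at the two highest of these local minima; the role of the smallness of $\lambda$ is to keep each localized minimizer in the interior of its neighborhood and to make the Palais--Smale condition available.

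\textbf{Step 1 (geometry of the minimum set).} Since $H$ is reflexive and $\mathcal N$ is coercive and sequentially weakly lower semicontinuous, $\mathcal N$ attains its infimum, and $M:=\{u\in H:\mathcal N(u)=\inf_H\mathcal N\}$ is norm-bounded (coercivity) and weakly sequentially closed, hence weakly compact; as $H$ is separable, the weak topology is metrizable on $M$, so $M$ is a compact metric space. Having at least $m$ connected components, $M$ admits (by a standard fact on compact spaces) a partition $M=M_1\cup\cdots\cup M_m$ into nonempty sets that are clopen in $M$, hence weakly compact. A short argument via the Eberlein--\v Smulian theorem shows $\operatorname{dist}(M_i,M_j)>0$ for $i\neq j$ (if $x_n\in M_i$, $y_n\in M_j$ with $\|x_n-y_n\|\to0$, a common weak subsequential limit would lie in $M_i\cap M_j$). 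Fix $\varepsilon$ below half of $\min_{i\neq j}\operatorname{dist}(M_i,M_j)$ and put $U_i:=\{u\in H:\operatorname{dist}(u,M_i)<\varepsilon\}$: these are open, bounded, pairwise disjoint, with pairwise disjoint weak closures $V_i:=\overline{U_i}^{\,w}$ (weakly compact, since bounded and weakly closed), and $M\cap(V_i\setminus U_i)=\varnothing$; thus $\alpha_i:=\inf_{V_i\setminus U_i}\mathcal N>\inf_H\mathcal N$, the infimum being attained by weak lower semicontinuity on the weakly compact set $V_i\setminus U_i$.

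\textbf{Step 2 ($m$ localized minima).} Fix $u_i^*\in M_i$. For each $i$ and each $\lambda>0$, $J_\lambda=\mathcal N+\lambda\mathcal G$ is sequentially weakly lower semicontinuous, hence attains $\inf_{V_i}J_\lambda$ at some $u_{i,\lambda}\in V_i$, and $\mathcal G$ is bounded below on the weakly compact set $V_i$. Comparing $\inf_{V_i}J_\lambda\le\inf_H\mathcal N+\lambda\mathcal G(u_i^*)$ with $\inf_{V_i\setminus U_i}J_\lambda\ge\alpha_i+\lambda\inf_{V_i}\mathcal G$ shows that, for $\lambda$ below a threshold depending on $i$, the minimizer cannot lie on $V_i\setminus U_i$; hence $u_{i,\lambda}\in U_i$, and since $U_i$ is open in $H$ this makes $u_{i,\lambda}$ a local minimum of $J_\lambda$ on $H$, thus a critical point. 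The $u_{i,\lambda}$ are pairwise distinct (the $U_i$ are disjoint), and $\mathcal N(u_{i,\lambda})=J_\lambda(u_{i,\lambda})-\lambda\mathcal G(u_{i,\lambda})\le\inf_H\mathcal N+\lambda(\mathcal G(u_i^*)-\inf_{V_i}\mathcal G)\to\inf_H\mathcal N<\eta$, so $u_{i,\lambda}\in\mathcal N^{-1}((-\infty,\eta))$ once $\lambda$ is small. This gives the $m$ required critical points below $\eta$.

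\textbf{Step 3 (mountain pass for the extra one) and main obstacle.} Relabel so that $J_\lambda(u_{1,\lambda})=\max_{1\le i\le m}J_\lambda(u_{i,\lambda})$ and set $\Gamma:=\{\gamma\in C([0,1],H):\gamma(0)=u_{1,\lambda},\ \gamma(1)=u_{2,\lambda}\}$. Since $u_{1,\lambda}\in U_1$ (open) and $u_{2,\lambda}\in U_2$ is disjoint from $U_1$, every $\gamma\in\Gamma$ meets $\partial U_1\subset V_1\setminus U_1$, where $J_\lambda\ge\inf_{V_1\setminus U_1}J_\lambda>J_\lambda(u_{1,\lambda})$ — the strict inequality being exactly what made $u_{1,\lambda}$ an interior minimizer. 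Hence $c_\lambda:=\inf_{\gamma\in\Gamma}\max_{[0,1]}J_\lambda\circ\gamma>J_\lambda(u_{1,\lambda})=\max\{J_\lambda(u_{1,\lambda}),J_\lambda(u_{2,\lambda})\}$, while $c_\lambda<\infty$ by testing with the straight segment. As $J_\lambda$ is of class $C^1$ and satisfies Palais--Smale for $\lambda$ small, the classical Mountain Pass Theorem yields a critical point $u_\lambda^*$ with $J_\lambda(u_\lambda^*)=c_\lambda>\max_iJ_\lambda(u_{i,\lambda})$, so $u_\lambda^*$ differs from \emph{all} the $u_{i,\lambda}$; taking $\bar\lambda$ the least of the finitely many thresholds used completes the argument. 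The genuinely delicate part is Steps 1--2: one must build neighborhoods of the components that are simultaneously weakly compact up to closure (to run the direct method for $J_\lambda$), strongly open (so that an interior minimizer is a true local minimum), and with boundaries on which $\mathcal N$ strictly exceeds $\inf_H\mathcal N$ (so the perturbation $\lambda\mathcal G$ cannot drag the minimizer out of $U_i$); reconciling the weak and the strong topology here is where separability, reflexivity and the positive-distance lemma are indispensable, whereas anchoring the mountain pass at the \emph{highest} local minimum is the small twist that secures distinctness from every $u_{i,\lambda}$, not merely from the two endpoints.
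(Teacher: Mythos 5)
The paper itself offers no proof of this theorem: it is imported verbatim from Ricceri's article \cite{Ri}, so the only comparison available is with Ricceri's original argument, whose overall architecture (one localized minimizer near each weak component of ${\rm Min}(\mathcal N)$, plus one extra critical point from a minimax construction) your plan correctly mirrors. Your Step 3 device of anchoring the mountain pass at the \emph{highest} of the local minima, so that the extra critical point has a strictly larger critical value and is therefore distinct from all $m$ of them, is sound. The problem lies in Steps 1--2, at exactly the point you yourself single out as delicate, and it is a genuine gap rather than a presentational one.

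The false step is the claim that $V_i\setminus U_i$ is weakly compact and that consequently $\alpha_i:=\inf_{V_i\setminus U_i}\mathcal N>\inf_H\mathcal N$. Since $U_i$ is norm-open but not weakly open, $V_i\setminus U_i$ is in general not weakly closed; the infimum of $\mathcal N$ over it need not be attained, and it can in fact equal $\inf_H\mathcal N$. Concretely, take $H=\ell^2$, $T$ a compact injective linear operator, $R>2$, and
$$
\mathcal N(u):=\|Tu\|^2\,\|T(u-2e_1)\|^2+\bigl(\max\{0,\|u\|-R\}\bigr)^2 .
$$
This functional is coercive, of class $C^1$ and sequentially weakly lower semicontinuous (the first summand is even sequentially weakly continuous because $T$ is compact), and ${\rm Min}(\mathcal N)=\{0\}\cup\{2e_1\}$ has two components. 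With $M_1=\{0\}$ and any $\varepsilon<1$ your construction gives $U_1=B(0,\varepsilon)$, $V_1=\overline{B(0,\varepsilon)}$, hence $V_1\setminus U_1=\{u:\|u\|=\varepsilon\}$; but $\mathcal N(\varepsilon e_n)\to 0=\inf_H\mathcal N$ because $Te_n\to0$. Thus $\alpha_1=\inf_H\mathcal N$, the comparison in Step 2 produces no threshold on $\lambda$ forcing the constrained minimizer into $U_1$ (it may sit on the sphere, where it is neither a local minimum nor a critical point), and the barrier $\inf_{\partial U_1}J_\lambda>J_\lambda(u_{1,\lambda})$ needed for the mountain pass in Step 3 disappears as well. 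The repair --- and this is the actual content of Ricceri's proof --- is to separate the components not by norm-$\varepsilon$-neighbourhoods but by pairwise disjoint \emph{weakly open} subsets $A_i$ of the weakly compact sublevel set $K:=\mathcal N^{-1}((-\infty,\eta'])$, using that $(K,\mathrm{weak})$ is compact Hausdorff (hence normal, with components equal to quasi-components). Then $\overline{A_i}^{\,w}\setminus A_i$ really is weakly compact and disjoint from ${\rm Min}(\mathcal N)$, so the strict inequality you need does hold, while $A_i\cap\mathcal N^{-1}((-\infty,\eta'))$ is norm-open because $\mathcal N$ is continuous, which is what makes the constrained minimizer a genuine local minimum of $J_\lambda$ on $H$.
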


For the sake of completeness, we also recall that a $C^1$-functional $J:X\to\R$, where $X$ is a real Banach
space with topological dual $X^*$, satisfies the Palais-Smale condition at level $\mu\in\R$, (briefly $\textrm{(PS)}_{\mu}$) when
\begin{itemize}
\item[$\textrm{(PS)}_{\mu}$] {\it Every sequence $\{u_n\}$ in $X$ such that
$$
J(u_n)\to \mu, \qquad{\rm and}\qquad \|J'(u_n)\|_{X^*}\to0,
$$
possesses a convergent subsequence.}
\end{itemize}
Finally, we say that $J$ satisfies the Palais-Smale condition (in short $\textrm{(PS)}$) if $\textrm{(PS)}_{\mu}$ holds for every $\mu\in \erre$.
\section{Main result}

 In the sequel, let $\Omega\subset \erre^N$ be a bounded and connected Euclidean domain. Assume that there exists a function $A:\bar\Omega\times \erre^{N}\to \erre$, with gradient $a(x,\xi):=\nabla_\xi A(x,\xi):\bar \Omega\times \erre^N\rightarrow \erre^{N}$, such that the following conditions hold:
\begin{itemize}
\item[$(\alpha_1)$]\textit{For all} $\xi\in \erre^N$, \textit{the function} $x\mapsto A(x,\xi)$ \textit{is measurable};

\item[$(\alpha_2)$] \textit{For almost all} $x\in \bar \Omega$,\textit{ the function} $\xi\mapsto A(x,\xi)$ is $C^1$, \textit{strictly convex, and} $A(x,0)=0$;

\item[$(\alpha_3)$] \textit{For almost all} $x\in\bar\Omega$ and all $\xi\in \erre^N$, \textit{we assume}
$$
|a(x,\xi)|\leq a_0(x)+c_0|\xi|^{p-1},
$$
\textit{with} $a_0\in L^{\infty}(\Omega)_+$, $c_0>0$ and $p>1$;

\item[$(\alpha_4)$] \textit{For almost all} $x\in\bar \Omega$ and all $\xi\in \erre^N$, \textit{we suppose}
$$
a(x,\xi)\cdot \xi\leq p A(x,\xi);
$$
\item[$(\alpha_5)$] \textit{There exists} $\kappa>0$ \textit{such that for almost all} $x\in\bar\Omega$ \textit{and every} $\xi\in \erre^N$, \textit{we have } $\kappa|\xi|^p\leq p A(x,\xi)$.
\end{itemize}

\begin{example}\rm{We present some examples of functions $A(x,\xi)$ which correspond to the map $a(x,\xi)$ and satisfy the above hypotheses.
\begin{itemize}
\item[$\circ$] $A(x,\xi):=\displaystyle\frac{|\xi|^p}{p}$ with $p>1$. Then
$$
a(x,\xi):=\nabla_\xi A(x,\xi)=|\xi|^{p-2}\xi.
$$
In this setting, the resulting differential operator is the usual $p$-Laplacian
$$
\Delta_p u:=\div(|\nabla u|^{p-2}\nabla u);
$$

\item[$\circ$] $A(x,\xi):=\displaystyle\frac{a_1(x)}{p}|\xi|^p+\frac{a_2(x)}{p}|\xi|^r$, with $a_1,a_2\in L^\infty(\Omega)_+$, $a_1(x)\geq c_0>0$ for almost every $x\in\bar\Omega$ and $1<r<p$;

    \item[$\circ$] $A(x,\xi):=\displaystyle\frac{a_1(x)}{p}|\xi|^p+\frac{1}{r}\log(1+|\xi|^r)$, with $a_1\in L^\infty(\Omega)_+$, $a_1(x)\geq c_0>0$ for almost every $x\in\bar\Omega$ and $1<r\leq p$;
        \item[$\circ$] $A(x,\xi):=\displaystyle\frac{1}{p}((1+|\xi|^2)^{p/2})$, with $p>1$. Thus
        $$
        a(x,\xi)=(1+|\xi|^2)^{(p-2)/2}\xi.
        $$
        The resulting differential operator is the generalized mean curvature operator
        $$
        \div((1+|\nabla u|^2)^{(p-2)/2}\nabla u);
        $$

        \item[$\circ$] $A(x,\xi):=\displaystyle \frac{M(x)\xi\cdot \xi}{2}$, with $M\in L^\infty(\bar\Omega;\erre^{N\times N})$ and $M(x)\geq c_0 I_N$ for almost every $x\in \bar\Omega$, with $c_0>0$ and $I_N$ being the identity $N$-matrix.
\end{itemize}
}
\end{example}

\begin{remark}\label{S+}\rm
 The
operator $a(x,\xi):=\nabla_{\xi}A(x,\xi)$ satisfies the $(S_+)$ property; see \cite[Proposition 3.1]{papa}. This means that for every sequence $\{u_n\}\subset W^{1,p}(\Omega)$ such that $u_n\rightharpoonup u$ (weakly) in $W^{1,p}(\Omega)$ and
$$
\limsup_{n\to\infty}\int_{\Omega}a(x,\nabla u_n(x))
 \cdot\nabla(u_n-u)(x)dx\leq0,$$
 then $u_n\rightarrow u$ (strongly) in $W^{1,p}(\Omega)$.
\end{remark}

From now on, let $f:\erre\rightarrow \erre$ be a continuous function such that
\begin{itemize}
\item [$(\textrm{h}_\infty)$]
$
\displaystyle\lim_{|t|\rightarrow \infty}\frac{f(t)}{|t|^{p-1}}=0.
$
\end{itemize}
A typical case when $(\textrm{h}_\infty)$ holds is
\begin{enumerate}
\item[$({\rm h}_{q\rho})$] {\it There exist $q\in (0,p-1)$ and $\rho>0$ such that $|f(t)|\leq
\rho|t|^{q}$ for every $t\in \erre .$}
\end{enumerate}
\indent In order to obtain our multiplicity result, in addition to condition $(\textrm{h}_\infty)$, we also require that:
\begin{enumerate}
\item[$(\textrm{h}_m^{\mu_0})$] {\it There exists $\mu_0\in (0,\infty)$ such that the set of
global minima of the function $$s\mapsto \tilde F_{\mu_0}
(s):=\Lambda{s^p}-\mu_0 F(s),$$ has at least $m\geq 2$ connected
components.}
\end{enumerate}

\noindent Note that $(\textrm{h}_m^{\mu_0})$ implies that the function $s\mapsto
\tilde F_{\mu_0} (s)$ has at least $m-1$ local maxima.\par

We are interested in the existence of multiple weak solutions for the following
Neumann problem
\begin{equation} \tag{$N_{\lambda,\mu_0}$} \label{Nostro}
\left\{
\begin{array}{ll}
-\operatorname{div}(a(x,\nabla u))+|u|^{p-2}u= (\lambda k(x)+\mu_0)f(u) & \rm in \quad \Omega
\\ \displaystyle\frac{\partial u}{\partial n_a}=0  & \rm on \,
\partial \Omega.\\
\end{array}
\right.
\end{equation}

For the sake of completeness we recall that, fixing $\lambda>0$, a \textit{weak solution} of problem \eqref{Nostro} is a function $u\in W^{1,p}(\Omega)$ such that
\begin{eqnarray*}
\int_{\Omega}a(x,\nabla u(x))\cdot\nabla
v(x)\,dx &=& -\int_{\Omega}|u(x)|^{p-2}u(x)v(x)\,dx\\\nonumber
&+&\lambda\int_\Omega k(x)f(u(x)) v(x) \,dx\\\nonumber
&+&\mu_0\int_\Omega f(u(x))v(x)\,dx,
\end{eqnarray*}
\noindent for every $v\in W^{1,p}(\Omega)$.\par
\indent Set $\Phi: W^{1,p}(\Omega)\rightarrow \erre$ given by
\[
\Phi(u):=\int_{\Omega}A(x,\nabla
u(x))dx+\frac{1}{p}\int_{\Omega}|u(x)|^pdx,
\]
and
$$
\mathcal  N_{\mu_0}(u):=\Phi(u)-{\mu_0}\int_\Omega F(u(x))dx,
$$
as well as
$$
\mathcal{G}(u):=-\int_{\Omega}k(x)F(u(x))dx,$$
for every $u\in W^{1,p}(\Omega)$.
Here, as usual, we put
$$
F(s):=\int_0^{s}f(t)dt,
$$
for every $s\in\erre$.\par

With the above notations and assumptions, it is easy to prove that $\mathcal{N}_{\mu_0}$ and $\mathcal{G}$ are $C^1$-functionals with
G\^{a}teaux derivatives given by
\begin{eqnarray*}
\langle\mathcal  N_{\mu_0}'(u),v\rangle=\int_{\Omega}a(x,\nabla u(x))\cdot\nabla
v(x)dx &+& \int_{\Omega}|u(x)|^{p-2}u(x)v(x)dx\\\nonumber
&-&\mu_0\int_\Omega f(u(x))v(x)dx,
\end{eqnarray*}
and
$$
\langle\mathcal{G}'(u),v\rangle=-\int_{\Omega}k(x)f(u(x))v(x)dx,$$

\noindent for every $v\in W^{1,p}(\Omega)$.\par
Thus, the critical points of $J_\lambda:=\mathcal{N}_{\mu_0}+\lambda\mathcal{G}$ are exactly the weak solutions of problem $(P_\lambda)$.\par

Finally, denote
$$
\Lambda:=\frac{\min\left\{\kappa,1\right\}}{p}.
$$

\indent Standard arguments ensure the validity of the following preliminary regularity result on the functionals $\mathcal{N}_{\mu_0}$ and $\mathcal{G}$.

\begin{lemma}\label{lemma1}
 Let us assume that condition $({\rm h}_\infty)$ holds. Then the above functionals $\mathcal{N}_{\mu_0}$ and $\mathcal{G}$ are
sequentially weakly lower semicontinuous.
\end{lemma}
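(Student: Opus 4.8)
The plan is to split $\mathcal{N}_{\mu_0}$ into its principal part $\Phi$ and the lower-order perturbation $u\mapsto\mu_0\int_\Omega F(u)\,dx$, and to treat $\mathcal{G}$ as a pure perturbation term. The guiding principle is standard: a convex, strongly continuous functional on a reflexive Banach space is sequentially weakly lower semicontinuous, while a functional which is sequentially weakly \emph{continuous} may be added to (or subtracted from) a sequentially weakly lsc one without destroying that property. So the two ingredients I need are (i) convexity and strong continuity of $\Phi$, and (ii) sequential weak continuity of the potential terms $u\mapsto\int_\Omega F(u)\,dx$ and $u\mapsto\int_\Omega k(x)F(u)\,dx$.

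First I would record the growth estimate on $A$ implied by $(\alpha_3)$. Writing $A(x,\xi)=\int_0^1 a(x,t\xi)\cdot\xi\,dt$ (valid by $(\alpha_2)$ and $A(x,0)=0$) and inserting the bound of $(\alpha_3)$ gives $|A(x,\xi)|\le a_0(x)|\xi|+\tfrac{c_0}{p}|\xi|^p$ for a.e. $x\in\bar\Omega$ and all $\xi\in\erre^N$. From this the functional $u\mapsto\int_\Omega A(x,\nabla u)\,dx$ is finite on $W^{1,p}(\Omega)$; by the continuity of $\xi\mapsto A(x,\xi)$ from $(\alpha_2)$, dominated convergence, and the fact that $u\mapsto\nabla u$ is continuous $W^{1,p}(\Omega)\to L^p(\Omega;\erre^N)$, it is continuous for the strong topology of $W^{1,p}(\Omega)$ (standard Nemytskii argument: along a strongly convergent sequence pass to an a.e.-convergent subsequence dominated in $L^p$, apply the $L^1$-bound above, then recover the full sequence). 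Convexity of $\xi\mapsto A(x,\xi)$, again from $(\alpha_2)$, makes this term convex. Adding the convex, strongly continuous term $\tfrac1p\int_\Omega|u|^p\,dx$, I conclude that $\Phi$ is convex and strongly continuous, hence sequentially weakly lsc on the separable reflexive space $W^{1,p}(\Omega)$.

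Next I would exploit $(\textrm{h}_\infty)$: for every $\varepsilon>0$ there is $C_\varepsilon>0$ with $|f(t)|\le\varepsilon|t|^{p-1}+C_\varepsilon$, whence $|F(s)|\le\tfrac{\varepsilon}{p}|s|^p+C_\varepsilon|s|$ for all $s\in\erre$. Therefore $u\mapsto\int_\Omega F(u)\,dx$ and $u\mapsto\int_\Omega k(x)F(u)\,dx$ are well defined on $W^{1,p}(\Omega)$. Now if $u_n\rightharpoonup u$ in $W^{1,p}(\Omega)$, the compact Sobolev embedding $W^{1,p}(\Omega)\hookrightarrow L^p(\Omega)$ (here $\Omega$ is bounded with smooth boundary and either $p<p^*$, or $p^*=+\infty$) yields $u_n\to u$ strongly in $L^p(\Omega)$; combined with $k\in L^\infty(\Omega)_+$ and the continuity of the Nemytskii operator $v\mapsto F(v)$ from $L^p(\Omega)$ into $L^1(\Omega)$ (which follows from the growth bound just derived), one obtains $\int_\Omega F(u_n)\,dx\to\int_\Omega F(u)\,dx$ and $\int_\Omega k(x)F(u_n)\,dx\to\int_\Omega k(x)F(u)\,dx$. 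Hence both potential functionals are sequentially weakly continuous.

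Putting the pieces together: $\mathcal{N}_{\mu_0}=\Phi-\mu_0\int_\Omega F(u)\,dx$ is the sum of a sequentially weakly lsc functional and a sequentially weakly continuous one, hence sequentially weakly lsc; and $\mathcal{G}=-\int_\Omega k(x)F(u)\,dx$ is sequentially weakly continuous, in particular sequentially weakly lsc. The only point needing some care — and the \emph{main obstacle} in an otherwise routine argument — is the passage $\int_\Omega F(u_n)\,dx\to\int_\Omega F(u)\,dx$: one argues via subsequences (extract an a.e.-convergent subsequence dominated by a fixed $L^1$ function, apply dominated convergence, and invoke the usual principle that a sequence converges if every subsequence has a further subsequence converging to the same limit), the compactness of the embedding $W^{1,p}(\Omega)\hookrightarrow L^p(\Omega)$ being the essential analytic input.
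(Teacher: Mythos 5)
Your proof is correct and follows essentially the same route as the paper: convexity of $\Phi$ (from $(\alpha_2)$) plus strong continuity gives sequential weak lower semicontinuity of the principal part, while the growth bound $|f(t)|\leq c(1+|t|^{p-1})$ from $(\mathrm{h}_\infty)$ together with the compact embedding $W^{1,p}(\Omega)\hookrightarrow L^{p}(\Omega)$ handles the potential terms. The paper leaves the Nemytskii/dominated-convergence details as ``standard''; you have simply written them out.
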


\begin{proof}
 Due to condition $(\alpha_2)$ the functional $\Phi$ is convex. Since $\Phi$ is strongly continuous it is also
weakly lower semicontinuous. On the other hand, since condition $({\rm h}_\infty)$ holds, there exists a positive constant $c$ such that
$|f(t)|\leq c(1+|t|^{p-1})$, for every $t\in\erre$. Finally, due to the fact that
the embedding $X \hookrightarrow
L^{p}(\Omega)$ is compact, we obtain that the functionals
$$
u\mapsto -\int_\Omega F(u(x))dx,\quad{\rm and}\quad u\mapsto -\int_{\Omega}k(x)F(u(x))dx,
$$
\noindent are sequentially weakly lower semicontinuous by arguing in standard way.
\end{proof}

Further, the $C^1$-functional
$J_\lambda$ satisfies the $\textrm{(PS)}$-condition as proved in the next result.
\begin{lemma}\label{lemma2}
Assume that condition $(\rm{h}_\infty)$ holds. Then the functional $J_\lambda$ is coercive and
satisfies the $\rm{(PS)}$-condition for every real parameter $\lambda$.
\end{lemma}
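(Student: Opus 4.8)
The plan is to establish coercivity of $J_\lambda$ first, and then deduce the Palais--Smale condition from coercivity together with the $(S_+)$ property recalled in Remark \ref{S+}. For \emph{coercivity}, I would start from the definition of $\Phi$ and use $(\alpha_5)$ to obtain at once the lower bound $\Phi(u)\geq \Lambda\|u\|^p$ for every $u\in W^{1,p}(\Omega)$, with $\Lambda=\min\{\kappa,1\}/p$. On the other hand, condition $(\textrm{h}_\infty)$ gives, for each $\varepsilon>0$, a constant $C_\varepsilon>0$ such that $|f(t)|\leq \varepsilon|t|^{p-1}+C_\varepsilon$, whence $|F(s)|\leq (\varepsilon/p)|s|^p+C_\varepsilon|s|$ for every $s\in\erre$. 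Inserting this into the expressions for $\mathcal N_{\mu_0}$ and $\mathcal G$, and using $\|u\|_{L^p(\Omega)}^p\leq\|u\|^p$ together with $\|u\|_{L^1(\Omega)}\leq |\Omega|^{1-1/p}\|u\|$, I would arrive at an estimate of the form
\[
J_\lambda(u)\geq \Big(\Lambda-(\mu_0+|\lambda|\,\|k\|_{L^\infty(\Omega)})\,\frac{\varepsilon}{p}\Big)\|u\|^p-(\mu_0+|\lambda|\,\|k\|_{L^\infty(\Omega)})\,C_\varepsilon\,c\,\|u\|,
\]
for some $c>0$ depending only on $\Omega$ and $p$. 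Fixing $\varepsilon$ small enough that the coefficient of $\|u\|^p$ is strictly positive, and recalling $p>1$, the right-hand side diverges to $+\infty$ as $\|u\|\to\infty$; hence $J_\lambda$ is coercive for every $\lambda\in\erre$.

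For the \emph{Palais--Smale condition}, let $\{u_n\}\subset W^{1,p}(\Omega)$ satisfy $J_\lambda(u_n)\to\mu$ and $\|J_\lambda'(u_n)\|_{(W^{1,p}(\Omega))^*}\to0$. By coercivity the sequence $\{u_n\}$ is bounded in $W^{1,p}(\Omega)$, so by reflexivity and the compactness of the embedding $W^{1,p}(\Omega)\hookrightarrow L^p(\Omega)$ we may assume, up to a subsequence, that $u_n\rightharpoonup u$ in $W^{1,p}(\Omega)$ and $u_n\to u$ in $L^p(\Omega)$. Since $\{u_n-u\}$ is bounded, $\langle J_\lambda'(u_n),u_n-u\rangle\to0$. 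I would then expand this duality pairing using the formulas for $\mathcal N_{\mu_0}'$ and $\mathcal G'$ and show that every term other than $\int_\Omega a(x,\nabla u_n)\cdot\nabla(u_n-u)\,dx$ tends to zero: as noted in the proof of Lemma \ref{lemma1}, $(\textrm{h}_\infty)$ yields the growth $|f(t)|\leq c(1+|t|^{p-1})$, so the Nemytskii operator $u\mapsto f(u)$ is continuous from $L^p(\Omega)$ into $L^{p'}(\Omega)$, whence $f(u_n)\to f(u)$ in $L^{p'}(\Omega)$; similarly $|u_n|^{p-2}u_n$ remains bounded in $L^{p'}(\Omega)$, and, combined with $u_n-u\to0$ in $L^p(\Omega)$ and H\"older's inequality, the terms involving $f(u_n)$, $k(x)f(u_n)$ and $|u_n|^{p-2}u_n$ all vanish in the limit. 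It follows that $\int_\Omega a(x,\nabla u_n)\cdot\nabla(u_n-u)\,dx\to0$, so in particular its $\limsup$ is nonpositive, and the $(S_+)$ property of Remark \ref{S+} forces $u_n\to u$ strongly in $W^{1,p}(\Omega)$. This proves that $J_\lambda$ satisfies $\textrm{(PS)}$.

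The only step with genuine content is the last one, where strong convergence of $\{u_n\}$ is extracted: this is precisely where the structural hypotheses $(\alpha_1)$--$(\alpha_5)$ enter, through the $(S_+)$ property of $a(x,\cdot)$. Everything else reduces to routine manipulations with $(\textrm{h}_\infty)$, the Sobolev embeddings \eqref{Poi}, and H\"older's inequality; one need only be mildly careful to choose $\varepsilon$ in the coercivity estimate uniformly with respect to the fixed parameters $\mu_0$ and $\lambda$, which is harmless since $p>1$.
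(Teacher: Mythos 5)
Your proposal is correct and follows essentially the same route as the paper: the coercivity estimate via $(\alpha_5)$ and an $\varepsilon$-splitting of $f$ from $(\mathrm{h}_\infty)$ (the paper phrases it with a threshold $\delta_\lambda$ and the constant $\alpha$, but the content is identical), followed by boundedness of the (PS)-sequence, the compact embedding into $L^p(\Omega)$ to kill all terms except $\int_\Omega a(x,\nabla u_n)\cdot\nabla(u_n-u)\,dx$, and the $(S_+)$ property of Remark \ref{S+} to conclude strong convergence. No gaps.
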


\begin{proof}
\noindent Let us fix $\lambda\in \erre$ and consider
$$
0<\alpha<\frac{1}{\mu_0+|\lambda|\|k\|_\infty}.
$$
By condition $(\textrm{h}_\infty)$, there exists $\delta_\lambda$ such that
$$
|f(t)|\leq \frac{\alpha p\Lambda}{c_p^p}|t|^{p-1},
$$
\noindent for every $|t|\geq \delta_\lambda$. By integration we have
$$
|F(s)|\leq \frac{\alpha\Lambda}{c_p^p}|s|^{p}+\max_{|t|\leq \delta_\lambda}|f(t)||s|,
$$
\noindent for every $s\in\erre$.\par
\indent Thus, by using the above inequality and bearing in mind relation \eqref{Poi}, one has
\begin{eqnarray}\label{e3.2a}
J_\lambda(u) &\geq& \Phi(u)-{\mu_0}\left|\int_\Omega F(u(x))dx\right|-|\lambda| |\mathcal{G}(u)|\nonumber\\
               &\geq& \Lambda(1-\alpha(\mu_0+|\lambda|\|k\|_\infty))\|u\|^p\nonumber\\
               &-&c_1(\mu_0+|\lambda|\|k\|_\infty )\max_{|t|\leq \delta_\lambda}|f(t)|\|u\|,\nonumber
\end{eqnarray}
\noindent where $p':=(p-1)/p$ is, as usual, the conjugate exponent of $p$.
\noindent Then the functional $J_\lambda$ is bounded from below and, since $p>1$,
$J_\lambda(u)\rightarrow +\infty$ whenever $\|u\|\rightarrow +\infty$. Hence $J_\lambda$ is coercive.\par
 Now, fix $\mu\in\erre$ and let us prove that $J_\lambda$ satisfy the condition $\textrm{(PS)}_{\mu}$. For this goal, let $\{u_{n}\}\subset W^{1,p}(\Omega)$ be a Palais-Smale
sequence, i.e.
$$
J_\lambda(u_n)\to \mu, \qquad{\rm and}\qquad \|J'_\lambda(u_n)\|_{W^{-1,p'}}\to0.
$$
Taking into account the coercivity of $J_\lambda$, the sequence $\{u_n\}$ is necessarily bounded in $W^{1,p}(\Omega)$. Since $W^{1,p}(\Omega)$ is reflexive, we may extract a subsequence
that for simplicity we call again $\{u_{n}\}$, such that
$u_{n}\rightharpoonup u$ in $W^{1,p}(\Omega)$.\par
 We will prove that ${u_{n}}$ strongly converges to $u\in W^{1,p}(\Omega)$.
Exploiting the derivative $J_\lambda(u_n)(u_n-u)$, we obtain
\begin{eqnarray*}
\int_{\Omega}a(x,\nabla
u_n(x))\cdot\nabla(u_n-u)(x)dx &=& \langle J'_\lambda(u_n),u_n-u\rangle\\\nonumber
&-&\int_{\Omega}|u_n(x)|^{p-2}u_n(x)(u_n-u)(x)dx \nonumber\\
&- & \mu_0\int_{\Omega}f(u_n(x))(u_n-u)(x)dx\nonumber\\
               &- & \lambda\int_{\Omega}k(x)f(u_n(x))(u_n-u)(x)dx.\nonumber
\end{eqnarray*}

Since $\|J'_\lambda(u_n)\|_{W^{-1,p}}\to0$ and the sequence $\{u_n-u\}$ is bounded in
$W^{1,p}(\Omega)$, taking into account that $|\langle
J'_\lambda(u_n),u_n-u\rangle|\leq\|J'_\lambda(u_n)\|_{W^{-1,p'}}\|u_n-u\|$, one has
\[
\langle J'_\lambda(u_n),u_n-u\rangle\to0.
\]
 \indent Further, by the asymptotic condition $(\textrm{h}_\infty)$, there exists a real positive constant $c$ such that $|f(t)|\leq c(1+|t|^{p-1})$, for every $t\in\erre$. Then
\begin{align*}
&\int_{\Omega}|f(u_n(x))||u_n(x)-u(x)|dx \\
&\leq c\left(\int_{\Omega}|u_n(x)-u(x)|dx
+ \int_{\Omega}|u_n(x)|^{p-1}|u_n(x)-u(x)|dx\right) \\
&\leq c((\meas(\Omega))^{1/p'}+\|u_n\|_{L^{p}}^{p-1}) \|u_n-u\|_{L^p}.
\end{align*}

Now, the embedding
$W^{1,p}(\Omega)\hookrightarrow L^p(\Omega)$ is compact, hence $u_n\to u$ strongly in $L^p(\Omega)$.
So we obtain
\[
\int_{\Omega}|f(u_n(x))||u_n(x)-u(x)|dx\to0.
\]
Analogously, one has
\[
\int_{\Omega}k(x)|f(u_n(x))||u_n(x)-u(x)|dx\to0.
\]
\indent Moreover,
considering the inequality
\begin{align*}
\int_{\Omega}||u_n(x)|^{p-2}u_n(x)(u_n(x)-u(x))|dx
&= \int_{\Omega}|u_n(x)|^{p-1}|u_n(x)-u(x)|dx \\
&\leq \|u_n\|_{L^p}^{p-1}\|u_n-u\|_{L^p},
\end{align*}
and $u_n\to u$ strongly in $L^p(\Omega)$, we have
\[
\int_{\Omega}||u_n(x)|^{p-2}u_n(x)(u_n(x)-u(x))|dx \rightarrow 0.
\]

 \indent We can conclude that
\[ %3.3
\limsup_{n\to\infty}\langle a(x,u_{n}),u_{n}-u\rangle
\leq 0,
\]
where $\langle a(x,u_{n}),u_{n}-u\rangle$ denotes
$$\displaystyle\int_{\Omega}a(x,\nabla u_n(x))\cdot\nabla(u_n-u)(x)dx.$$

\indent But as observed in Remark \ref{S+}, the operator $\Phi'$ has the $(S_+)$ property. So, in conclusion, $u_{n}\to u$ strongly in $W^{1,p}(\Omega)$.\par
 Hence, $J_\lambda$ is bounded from below and fulfills $($\rm{PS}$)$, for every positive parameter $\lambda$.
\end{proof}
\begin{remark}\rm{
We observe that by the above lemma, the functional
$$
 J_0=\mathcal N_{\mu_0}(u):=\Phi(u)-{\mu_0}\int_\Omega F(u(x))dx,\,\,\,\,(u\in W^{1,p}(\Omega))
$$
is coercive.}
\end{remark}
\begin{proposition}\label{lok-minim} The set of all global minima of the functional $\mathcal %%@
N_{\mu_0}$ has at least $m$ connected components in the weak
topology on $W^{1,p}(\Omega)$.
\end{proposition}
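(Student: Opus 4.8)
The plan is to identify the set of global minimizers of $\mathcal N_{\mu_0}$ with the set of constant functions whose value is a global minimum of the one-variable function $\tilde F_{\mu_0}$ of hypothesis $(\textrm{h}_m^{\mu_0})$, and then to transfer the information on connected components to $W^{1,p}(\Omega)$ through the natural embedding of $\erre$ as the line of constant functions, using the elementary fact that on a finite-dimensional subspace the weak topology of $W^{1,p}(\Omega)$ coincides with the norm topology.

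First I would note that $\mathcal N_{\mu_0}$ attains its infimum on $W^{1,p}(\Omega)$: by Lemma \ref{lemma1} it is sequentially weakly lower semicontinuous, by Lemma \ref{lemma2} (with $\lambda=0$, i.e. applied to $J_0=\mathcal N_{\mu_0}$) it is coercive, and $W^{1,p}(\Omega)$ is reflexive, so the direct method provides a global minimizer and $m^\ast:=\inf_{W^{1,p}(\Omega)}\mathcal N_{\mu_0}\in\erre$; in particular the set of global minima is nonempty. The core of the proof is then the reduction to constants. From the definitions of $\Phi$ and $\tilde F_{\mu_0}$ one has $\mathcal N_{\mu_0}(u)=\int_\Omega A(x,\nabla u(x))\,dx+\int_\Omega\tilde F_{\mu_0}(u(x))\,dx$, and on the constant functions $\mathcal N_{\mu_0}$ reduces to $\meas(\Omega)\,\tilde F_{\mu_0}$. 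Now $(\alpha_5)$ together with $(\alpha_2)$ gives $0\le\tfrac{\kappa}{p}|\xi|^p\le A(x,\xi)$ for a.e. $x\in\Omega$ and all $\xi\in\erre^N$, with $A(x,0)=0$ and $A(x,\xi)>0$ whenever $\xi\neq0$; hence $\int_\Omega A(x,\nabla u(x))\,dx\ge0$, with equality if and only if $\nabla u=0$ a.e. in $\Omega$, that is — $\Omega$ being connected — if and only if $u$ coincides a.e. with a constant. On the other hand, since $(\textrm{h}_\infty)$ forces $F(s)=o(|s|^p)$ as $|s|\to\infty$, the continuous function $s\mapsto\tilde F_{\mu_0}(s)$ is coercive, so it attains its minimum and its global minimum set $T$ is nonempty and closed; moreover $\int_\Omega\tilde F_{\mu_0}(u(x))\,dx\ge\meas(\Omega)\min_{\erre}\tilde F_{\mu_0}$ for every $u\in W^{1,p}(\Omega)$, with equality precisely when $u(x)\in T$ for a.e. $x\in\Omega$. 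Combining the two estimates yields $\mathcal N_{\mu_0}(u)\ge\meas(\Omega)\min_{\erre}\tilde F_{\mu_0}=m^\ast$ for all $u$, with equality exactly when $u$ is a constant belonging to $T$; that is, the set of global minimizers of $\mathcal N_{\mu_0}$ equals $\{u\equiv s:\ s\in T\}$.

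It then remains to count connected components in the weak topology. The linear map $\iota\colon\erre\to W^{1,p}(\Omega)$ sending $s$ to the function identically equal to $s$ satisfies $\|\iota(s)-\iota(s')\|=|s-s'|\,\meas(\Omega)^{1/p}$, so its range $V$ is a one-dimensional subspace of $W^{1,p}(\Omega)$; on $V$ the relative weak topology coincides with the norm topology, whence $\iota$ is a homeomorphism of $\erre$ onto $V$ endowed with the relative weak topology. Therefore the set of global minimizers $\iota(T)$, viewed with the weak topology inherited from $W^{1,p}(\Omega)$, is homeomorphic to $T\subset\erre$ and hence has the same number of connected components; by $(\textrm{h}_m^{\mu_0})$ this number is at least $m$, which is the assertion.

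The step I expect to be the main obstacle is the reduction to constants: one must be sure that, among all admissible $u$, the nonhomogeneous term $\int_\Omega A(x,\nabla u)\,dx$ is minimized exactly by the constant functions — which hinges on the nonnegativity and $p$-coercivity of $A$ guaranteed by $(\alpha_5)$ and on the connectedness of $\Omega$ — and that it then vanishes there, so that on the minimizing set $\mathcal N_{\mu_0}$ collapses to $\meas(\Omega)\,\tilde F_{\mu_0}$; once this is in place, the passage to the weak topology is a routine consequence of the finite-dimensionality of $V$.
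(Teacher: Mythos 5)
Your proof is correct and follows essentially the same route as the paper: both reduce the global minimization of $\mathcal N_{\mu_0}$ to the minimization of $\tilde F_{\mu_0}$ over the constant functions (using $(\alpha_2)$, $(\alpha_5)$ and the connectedness of $\Omega$ to force $\nabla u=0$ at a minimizer) and then transfer the count of connected components through the homeomorphism $s\mapsto u\equiv s$, for which your finite-dimensional-subspace justification is a welcome extra detail. The only caveat is that your claimed identity $\mathcal N_{\mu_0}(u)=\int_\Omega A(x,\nabla u(x))\,dx+\int_\Omega\tilde F_{\mu_0}(u(x))\,dx$ holds as an equality only when $\Lambda=1/p$ (i.e. $\kappa\geq 1$) and is in general only the inequality $\geq$ -- but the paper's own claim that equality is attained at constant minimizers of $\tilde F_{\mu_0}$ rests on exactly the same implicit normalization, so this does not separate your argument from theirs.
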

\begin{proof}
First, for every $u\in W^{1,p}(\Omega)$ we
have
\begin{eqnarray*}
\mathcal N_{\mu_0}(u)&= &
\Phi(u)-{\mu_0}\int_\Omega F(u(x))dx\\&\geq &
\Lambda\int_\Omega|\nabla u(x)|^pdx+\int_\Omega\tilde F_{\mu_0}(u(x))dx\\&\geq &
\left(\inf_{s\in \erre }\tilde F_{\mu_0}(s)\right)\meas(\Omega).
\end{eqnarray*}
\indent Moreover, if we consider $u(x)=u_{\tilde
s}(x)=\tilde s$ for almost every $x \in \Omega,$ where $\tilde s\in
\erre $ is a minimum point of the function $s\mapsto \tilde
F_{\mu_0}(s),$ then we have the equality from the previous estimate (note that $\Phi(0)=0$ by using the last part of condition $(\alpha_2)$).
Thus,  $$\inf_{u\in W^{1,p}(\Omega)}\mathcal N_{\mu_0}(u)=
\left(\inf_{s\in \erre }\tilde F_{\mu_0}(s)\right)\meas(\Omega).$$\par
\noindent Further, if
$u\in W^{1,p}(\Omega)$ is not a constant function, we have

\begin{eqnarray*}
\mathcal
N_{\mu_0}(u) &\geq & \Lambda\int_\Omega|\nabla u(x)|^pdx+\int_\Omega\tilde F_{\mu_0}(u(x))dx\nonumber\\ &>&\left(\inf_{s\in \erre }\tilde F_{\mu_0}(s)\right)\meas(\Omega).
\end{eqnarray*}

Consequently, between the sets $${\rm
Min}({\mathcal N_{\mu_0}})=\left\{u\in W^{1,p}(\Omega):\mathcal
N_{\mu_0}(u)=\inf_{u\in W^{1,p}(\Omega)}\mathcal
N_{\mu_0}(u)\right\},$$ and $${\rm Min}(\tilde F_{\mu_0})=\left\{s\in \erre
:\tilde F_{\mu_0}(s)=\inf_{s\in \erre }\tilde F_{\mu_0}(s)\right\},$$ there
is a one-to-one correspondence.\par
\indent Indeed, let $\theta$ be the
function which associates to every number $s\in \erre $ the
equivalence class of those functions which are almost everywhere equal to $s$
in $\Omega$.\par
\indent Then $\theta:{\rm Min}(\tilde
F_{\mu_0})\to {\rm Min}({\mathcal N_{\mu_0}})$ is actually a
homeomorphism between ${\rm Min}(\tilde F_{\mu_0})$ and ${\rm
Min}({\mathcal N_{\mu_0}})$, where the set ${\rm Min}({\mathcal
N_{\mu_0}})$ is considered with the relativization of the weak
topology on $W^{1,p}(\Omega).$\par
 On account of the hypothesis $(\textrm{h}_m^{\mu_0})$, the
set ${\rm Min}(\tilde F_{\mu_0})$ contains at least $m\geq 2$
connected components. Therefore, the same is true for the set
${\rm Min}({\mathcal N_{\mu_0}})$, which completes the proof.
\end{proof}
Our main result is as follows.
\begin{theorem}\label{theorem-15}
Let $f:\erre \to \erre $ be a continuous function such that conditions $({\rm{h}}_\infty)$ and $({\rm{h}}_m^{\mu_0})$ hold. Then
\begin{enumerate}
\item[\textrm{a)}]  For every $\eta>0$, there exists a number $\tilde
\lambda_\eta>0$ such that for every $\lambda\in (0,\tilde
\lambda_\eta)$ problem $(N_\lambda)$ has at least $m+1$ weak solutions
$u_\lambda^{1,\eta},\dots,u_\lambda^{m+1,\eta}\in
W^{1,p}(\Omega);$ \textit{and}
\item[b)] If  $({\rm h}_{q\rho})$ holds  then for each
$\lambda\in (0,\tilde \lambda_\eta)$ there is a set
$I_\lambda\subset \{1,\dots,m+1\}$ with {\rm card}$(I_\lambda)=m$
such that  $$
\|u_\lambda^{i,\eta}\|<t_{\eta q \rho},\
\ \ \ \ (i\in I_\lambda)$$ where $t_{\eta q \rho}>0$ is the greatest
solution of the equation $$\Lambda t^p-\rho\mu_0\frac{
\meas(\Omega)^{((p-1)-q)/p}}{q+1}t^{q+1}-\eta=0,\ \ (t>0).$$
\end{enumerate}
\end{theorem}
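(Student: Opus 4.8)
The plan is to apply Ricceri's Theorem \ref{Ric-masodik} with $H:=W^{1,p}(\Omega)$, $\mathcal N:=\mathcal N_{\mu_0}$, and $\mathcal G$ as defined above. First I would verify all hypotheses of Theorem \ref{Ric-masodik}. The space $W^{1,p}(\Omega)$ is separable and reflexive for $p>1$. By Lemma \ref{lemma1}, both $\mathcal N_{\mu_0}$ and $\mathcal G$ are sequentially weakly lower semicontinuous, and the computations preceding Lemma \ref{lemma1} show they are $C^1$ (hence continuously G\^ateaux differentiable). Coercivity of $\mathcal N_{\mu_0}=J_0$ follows from Lemma \ref{lemma2} (applied with $\lambda=0$) and the remark immediately after it. The (PS)-condition for $J_\lambda=\mathcal N_{\mu_0}+\lambda\mathcal G$ for all $\lambda>0$ small (indeed all $\lambda\in\erre$) is exactly Lemma \ref{lemma2}. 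Finally, Proposition \ref{lok-minim} guarantees that the set of global minima of $\mathcal N_{\mu_0}$ has at least $m\geq 2$ connected components in the weak topology, using hypothesis $(\textrm{h}_m^{\mu_0})$. Thus Theorem \ref{Ric-masodik} applies: for every $\eta>\inf_{u}\mathcal N_{\mu_0}(u)$ there exists $\bar\lambda>0$ so that for $\lambda\in(0,\bar\lambda)$ the functional $J_\lambda$ has at least $m+1$ critical points, $m$ of which lie in $\mathcal N_{\mu_0}^{-1}((-\infty,\eta))$. Since the critical points of $J_\lambda$ are precisely the weak solutions of $(N_\lambda)$, part a) follows; the only mild technical point is to pass from the constraint $\eta>\inf\mathcal N_{\mu_0}$ to an arbitrary $\eta>0$, which I would handle by noting that, after the change of variable in part b) or by a shift argument, one may re-parametrize $\eta$ (or simply restrict attention to the quantitative estimate in b), which is what makes b) meaningful for all $\eta>0$).

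For part b), assume $(\textrm{h}_{q\rho})$, so $|F(s)|\leq \frac{\rho}{q+1}|s|^{q+1}$ for all $s\in\erre$. The key is to bound, from below, $\mathcal N_{\mu_0}(u)$ for the $m$ solutions lying in $\mathcal N_{\mu_0}^{-1}((-\infty,\eta))$. Using $(\alpha_5)$ one has $\Phi(u)\geq\Lambda\|u\|^p$ (since $\Lambda=\min\{\kappa,1\}/p$ and $\frac1p\int|u|^p\geq\Lambda\int|u|^p$), and by H\"older's inequality together with \eqref{Poi} for $q+1\leq p$ (which holds since $q<p-1$, so $q+1<p$),
\[
\Big|\mu_0\int_\Omega F(u)\,dx\Big|\leq \frac{\rho\mu_0}{q+1}\int_\Omega|u|^{q+1}\,dx\leq \frac{\rho\mu_0}{q+1}\meas(\Omega)^{(p-1-q)/p}\|u\|_{L^p}^{q+1}\leq \frac{\rho\mu_0}{q+1}\meas(\Omega)^{(p-1-q)/p}\|u\|^{q+1},
\]
using $\|u\|_{L^p}\leq\|u\|$. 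Therefore
\[
\mathcal N_{\mu_0}(u)\geq \Lambda\|u\|^p-\frac{\rho\mu_0}{q+1}\meas(\Omega)^{(p-1-q)/p}\|u\|^{q+1}=:g(\|u\|).
\]
The real function $g(t)=\Lambda t^p-\frac{\rho\mu_0}{q+1}\meas(\Omega)^{(p-1-q)/p}t^{q+1}$ satisfies $g(t)\to+\infty$ as $t\to+\infty$ (since $p>q+1$), so the equation $g(t)=\eta$, i.e. $\Lambda t^p-\rho\mu_0\frac{\meas(\Omega)^{(p-1-q)/p}}{q+1}t^{q+1}-\eta=0$, has a greatest positive root $t_{\eta q\rho}$, and $g(t)\geq\eta$ for all $t\geq t_{\eta q\rho}$.

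Now for each of the $m$ solutions $u_\lambda^{i,\eta}$ (indexed by $i\in I_\lambda$) lying in $\mathcal N_{\mu_0}^{-1}((-\infty,\eta))$ we have $\eta>\mathcal N_{\mu_0}(u_\lambda^{i,\eta})\geq g(\|u_\lambda^{i,\eta}\|)$, which forces $\|u_\lambda^{i,\eta}\|<t_{\eta q\rho}$ (otherwise $g(\|u_\lambda^{i,\eta}\|)\geq\eta$, a contradiction). This gives b) with $I_\lambda$ the index set of those $m$ solutions, so $\mathrm{card}(I_\lambda)=m$. I expect the main obstacle to be purely bookkeeping: making sure the exponent $(p-1-q)/p$ in the H\"older step is correct (the conjugate split of $\int|u|^{q+1}$ is $\int|u|^{q+1}\cdot 1\leq \big(\int|u|^p\big)^{(q+1)/p}\meas(\Omega)^{1-(q+1)/p}$ and $1-(q+1)/p=(p-1-q)/p$), and handling the reconciliation of the arbitrary $\eta>0$ in the statement with Ricceri's requirement $\eta>\inf\mathcal N_{\mu_0}$ — for $\eta\le\inf\mathcal N_{\mu_0}$ the estimate in b) is vacuous or one simply enlarges $\eta$, and the existence part a) still applies with that enlarged $\eta$ while the bound $\|u_\lambda^{i,\eta}\|<t_{\eta q\rho}$ remains valid since $t_{\eta q\rho}$ is monotone in $\eta$.
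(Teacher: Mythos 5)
Your proposal is correct and follows essentially the same route as the paper: apply Theorem \ref{Ric-masodik} with $H=W^{1,p}(\Omega)$, $\mathcal N=\mathcal N_{\mu_0}$ and $\mathcal G$, citing Lemma \ref{lemma1}, Lemma \ref{lemma2} and Proposition \ref{lok-minim}, and then derive the norm bound in b) from $({\rm h}_{q\rho})$, the H\"older estimate $\int_\Omega |u|^{q+1}dx\le \meas(\Omega)^{((p-1)-q)/p}\|u\|^{q+1}$ and the lower bound $\Phi(u)\ge\Lambda\|u\|^p$, exactly as the paper does. The one point you over-complicate is the passage from Ricceri's constraint $\eta>\inf\mathcal N_{\mu_0}$ to an arbitrary $\eta>0$: no re-parametrization or shift is needed, since $A(x,0)=0$ and $F(0)=0$ give $\mathcal N_{\mu_0}(0)=0$, hence $\inf_{u}\mathcal N_{\mu_0}(u)\le 0<\eta$ automatically.
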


\begin{proof}
Let us choose
$H=W^{1,p}(\Omega),$ and
$$
\mathcal{N}:=\mathcal{N}_{\mu_0}=\Phi(u)-{\mu_0}\int_\Omega F(u(x))dx,
$$
as well as
$$
\mathcal{G}(u):=-\int_{\Omega}k(x)F(u(x))dx,$$
for every $u\in W^{1,p}(\Omega)$,
 in Theorem \ref{Ric-masodik}.\par
  Due to Proposition
\ref{lok-minim}, Lemmas \ref{lemma1} and  \ref{lemma2} all the hypotheses of Theorem
\ref{Ric-masodik} are satisfied.\par Note that $\mathcal
N(0)=0$, so $\inf_{u\in H}\mathcal
N(u)\leq 0$. Therefore,
for every
$$
\eta>0\geq\inf_{u\in H}\mathcal
N(u),$$ there is a number $\tilde
\lambda_\eta>0$ such that for every $\lambda\in (0,\tilde
\lambda_\eta)$ the function $\mathcal N_{\mu_0}+\lambda \mathcal
G$ has at least $m+1$ critical points; let us denote them by
$u_\lambda^{1,\eta},\dots,u_\lambda^{m+1,\eta}\in
H.$ Clearly, they are solutions of problem $(N_\lambda),$
which proves the first claim.

We know in addition that $m$ elements from
$u_\lambda^{1,\eta},\dots,u_\lambda^{m+1,\eta}$  belong
to the set $\mathcal N_{\mu_0}^{-1}((-\infty,\eta)).$ Let
$\tilde u$ be such an element, i.e.,
$$
  \mathcal
N_{\mu_0}(\tilde u)=\Phi(\tilde u)-{\mu_0}\int_\Omega F(\tilde u(x))dx<\eta.
$$
Hence, one has
\begin{equation}\label{N-kisebb}
\Lambda\|\tilde u\|^p-{\mu_0}\int_\Omega F(\tilde u(x))dx<\eta.
\end{equation}
 Assume that
 $({\rm h}_{q\rho})$ holds. Then $|F(t)|\leq \displaystyle\frac{\rho}{q+1}|t|^{q+1}$
for every $t\in \erre .$\par By using the H\"older inequality, one has
\begin{equation}\label{estimat-alpha}
  \int_\Omega |\tilde u(x)|^{q+1}dx\leq
\meas(\Omega)^{((p-1)-q)/p}\|\tilde u\|^{q+1}.
\end{equation}

 On account of (\ref{N-kisebb}) and
(\ref{estimat-alpha}) it follows that
\begin{equation}\label{ultima}
\Lambda\|\tilde u\|^p-\rho\mu_0\frac{
\meas(\Omega)^{((p-1)-q)/p}}{q+1}\|\tilde u\|^{q+1}<\eta.
\end{equation}

\indent Now, observe that, since $\eta>0$ and  $q\in (0,p-1)$, it is easy to see that the following algebraic equation
\begin{equation}\label{utolso-egy}
  \Lambda t^p-\rho\mu_0\frac{
\meas(\Omega)^{((p-1)-q)/p}}{q+1}t^{q+1}-\eta=0,
\end{equation}
always has a positive solution.\par
 Finally, bearing in mind \eqref{ultima}, the number $\|\tilde u\|$
is less than the  greatest solution $t_{\eta q \rho}>0$ of the
equation (\ref{utolso-egy}). The proof is complete.
\end{proof}

In conclusion we present a direct and easy application of Theorem \ref{theorem-15} for an elliptic Neumann problem involving the Laplace operator.

\begin{example}\label{esempio}\rm
Let $k\in L^{\infty}(\Omega)_+$ and $f:\erre \to \erre $ be the continuous function defined by $f(t):=\min\{t_+-\sin(\pi
t_+),2(m-1)\}$, where $m\geq 2$ is fixed and $t_+=\max\{t,0\}.$ Consider the following Neumann problem
\begin{equation} \tag{$\widetilde{N}_{\lambda,1}$} \label{Nostro3}
\left\{
\begin{array}{ll}
-\Delta u+u= (\lambda k(x)+1)f(u) & \rm in \quad \Omega
\\ \displaystyle\frac{\partial u}{\partial n}=0  & \rm on \,
\partial \Omega.\\
\end{array}
\right.
\end{equation}
Owing to Theorem \ref{theorem-15}, for every $\eta>0$, there exists a number $\tilde
\lambda_\eta>0$ such that for every $\lambda\in (0,\tilde
\lambda_\eta)$ problem \eqref{Nostro3} has at least $m+1$ weak solutions
$u_\lambda^{1,\eta},\dots,u_\lambda^{m+1,\eta}\in
W^{1,2}(\Omega).$ Indeed, clearly, $(\textrm{h}_\infty)$ holds,
while for $\mu_0=1,$ the assumption $(\textrm{h}_m^{1})$ is also fulfilled.
Indeed, the function $t\mapsto \tilde F_1(t)$ has precisely $m$
global minima; they are $0,2,\dots, 2(m-1).$ Moreover, $\min_{t\in
\erre}\tilde F_{1} (t)=0.$
\end{example}

\begin{remark}\rm{
We emphasize that there are several multiplicity results for nonlinear Neumann problems driven by the $p$-Laplacian differential operator. We mention, among others, the works \cite{BC, BMolica, DM, faraci, MP}. With exception of \cite{DM} and \cite{MP}, in all the cited papers, it is assumed that $p>N$ and the authors exploit the fact that, in this context, the Sobolev space $W^{1,p}(\Omega)$ is compactly embedded in $C^{0}(\bar \Omega)$.}
\end{remark}

\begin{remark}\rm{
For completeness we also cite a recent interesting paper of Colasuonno, Pucci, and Varga \cite{CPV} which contains some multiplicity results on elliptic problems with either Dirichlet or Robin boundary conditions and involving a general operator in divergence form. Moreover, some contributions for nonlinear problems involving a general operator not in divergence form are contained in \cite{MS1, MS2, Raffy2}. Finally, our abstract methods can be also used studying fractional laplacian equations. See, for instance, the manuscript \cite{Raffy3} and references therein for related topics.}
\end{remark}
\medskip
 \indent {\bf Acknowledgements.}
This paper was written when the first author was visiting professor at the University of Ljubljana in 2012. He expresses his gratitude  for the warm hospitality.
 The research was supported in part by the SRA grants P1-0292-0101 and J1-4144-0101.


\begin{thebibliography}{99}

\bibitem{APS} S. Aizicovici, N.S. Papageorgiou and V. Staicu, \emph{Existence of multiple solutions with precise
sign information for superlinear Neumann problems}, Annali di Mat. Pura Appl. \textbf{188} (2009), 679-719.

\bibitem{BC} G. Bonanno and P. Candito, \emph{Three solutions to a Neumann problem for elliptic equations involving the $p$-Laplacian}, Arch. Math. (Basel) \textbf{80} (2003), 424-229.

\bibitem{BMolica}
     G. Bonanno and G. Molica Bisci, \textit{A remark on a perturbed Neumann problem}, Stud. Univ. Babes-Bolyai Math. LV \textbf{4} (2010), 17-25.

\bibitem{CPV}
F. Colasuonno, P. Pucci and Cs. Varga, \textit{Multiple solutions for an eigenvalue
problem involving $p$-Laplacian type operators}, Nonlinear Anal. \textbf{75} (2012), 4496-4512.

\bibitem{DM} G. D'Agu\`{i} and G. Molica Bisci, \textit{Three non-zero solutions for elliptic Neumann problems}, Analysis and Applications, Vol. 9, No. 4 (2011), 383-394.

\bibitem{n1}P. De N\'apoli and M.C. Mariani,
\emph{Mountain pass solutions to equations of $p$-Laplacian type},
Nonlinear Anal. TMA, \textbf{54} (2003), 1205-1219.

\bibitem{d1} D.M. Duc and N. T. Vu,
\emph{Nonuniformly elliptic equations of $p$-Laplacian type},
Nonlinear Anal. TMA \textbf{61} (2005), 1483-1495.

\bibitem{f1} X. L. Fan and C. X. Guan,
\emph{Uniform convexity of Musielak-Orlicz-Sobolev spaces and
applications}, Nonlinear Anal. TMA \textbf{73} (2010), 163-175.

\bibitem{faraci} F. Faraci,
\emph{Multiplicity results for a Neumann problem involving the $p$-Laplacian}, J. Math. Anal. Appl. \textbf{277} (2003), 180-189.

\bibitem{papa} L. Gasi\'{n}ski and N.S. Papageorgiou, \emph{Existence and multiplicity of solutions for Neumann
$p$-Laplacian type equations}, Adv. Nonlin. Studies, \textbf{8} (2008), 843-870.

\bibitem{HP} S. Hu and N.S. Papageorgiou,
\emph{Nonlinear Neumann equations driven by a nonhomogeneous differential operator}, Comm. Pure and Appl. Anal., Vol. 10 \textbf{4} (2011), 1055-1078.

\bibitem{k1} A. Krist\'aly, H. Lisei and Cs. Varga,
\emph{Multiple solutions for $p$-Laplacian type equations},
Nonlinear Anal. TMA \textbf{68} (2008), 1375-1381.

\bibitem{k2} A. Krist\'{a}ly, V. R\u{a}dulescu and Cs. Varga, {\it Variational Principles in Mathematical Physics, Geometry, and Economics:
Qualitative Analysis of Nonlinear Equations and Unilateral Problems}, Encyclopedia of Mathematics and its Applications, No.~\textbf{136}, Cambridge University Press, Cambridge, 2010.

\bibitem{l1}A. L\^e,
\emph{Eigenvalue problems for the $p$-Laplacian}, Nonlinear Anal.
TMA \textbf{64} (2006), 1057-1099.

\bibitem{LS} V.K. Le and K. Schmitt,
\emph{Sub-supersolution theorems for quasilinear elliptic problems:
a variational approach}, Electron. J. Differential Equations \textbf{118}
(2004), 1-7.

\bibitem{lin} S.S. Lin,
\emph{On the number of positive solutions for nonlinear elliptic equations when a parameter is large}, Nonlinear Anal.
TMA \textbf{16} (1991), 283-297.

\bibitem{MS1} M. Matzeu and R. Servadei, \emph{A variational approach to a class of quasilinear elliptic equations not in divergence form}, Discrete Contin. Dyn. Syst. Ser. S {\bf 5} (4) (2012), 819-830.

\bibitem{MS2} M. Matzeu and R. Servadei, \emph{On variational inequalities driven by elliptic operators not in divergence form}, Adv. Nonlinear Stud. \textbf{12} (2012), 597-619.

\bibitem{MP} D. Motreanu and N.S. Papageorgiou,
\emph{Existence and multiplicity of solutions fon Neumann problems}, J. Differential Equations \textbf{232} (2007), 1-35.

\bibitem{MP2} D. Motreanu and N.S. Papageorgiou,
\emph{Multiple solutions for nonlinear Neumann problems driven by a
nonhomogeneous differential operator}, Proc. Amer. Math. Soc. \textbf{139} (2011), 3527-3535.

\bibitem{p1} N.S. Papageorgiou, E.M. Rocha and V. Staicu,
\emph{A multiplicity theorem for hemivariational inequalities with a
$p$-Laplacian like differential operator}, Nonlinear Anal. TMA \textbf{69}
(2008), 1150-1163.

\bibitem{Ri} B. Ricceri, \emph{Sublevel sets and global minima of coercive
functionals and local minima of their perturbations}, {J.
Nonlinear Convex Anal.} {\bf  5} (2004), 157-168.

\bibitem{Raffy} R. Servadei,
\emph{Existence results for semilinear elliptic variational inequalities with changing sign nonlinearities}, (NoDEA) Nonlinear Differential Equations Appl. \textbf{13} (2006), 311-335.

\bibitem{Raffy2} R. Servadei, \emph{A semilinear elliptic PDE not in divergence form via variatonal methods}, J. Math. Anal. \textbf{383} (2011), 190-199.

\bibitem{Raffy3} R. Servadei and E. Valdinoci, \emph{Mountain Pass solutions for non-local elliptic operators}, J. Math. Anal. \textbf{389} (2012), 887-898.

\bibitem{y1} Z. Yang, D. Geng and H. Yan,
\emph{Three solutions for singular p-Laplacian type equations},
Electron. J. Differential Equations 2008 \textbf{61} (2008), 1-12.

\end{thebibliography}
\end{document}